\newcommand{\C}{\mathbb{C} }
\newcommand{\R}{\mathbb{R} }
\newcommand{\Z}{\mathbb{Z} }
\newcommand{\T}{\mathbb{T} }
\newcommand{\N}{\mathbb{N} }
\newcommand{\indicator}[1]{{\bf 1}_{#1}}
\newcommand{\eof}[1]{e \inparentheses{#1}}
\newcommand{\inparentheses}[1]{\left( #1 \right)}
\renewcommand{\vector}[1]{{\bf #1}}
\newcommand{\form}{\mathcal{Q}}
\newcommand{\kernel}{K}
\DeclareMathOperator{\dilate}{D}
\DeclareMathOperator{\Kloosterman}{Kl}
\DeclareMathOperator{\high}{high}
\DeclareMathOperator{\low}{low}
\newtheorem{proposition}{Proposition}
\newtheorem{lemma}{Lemma}
\newtheorem{theorem}{Theorem}
\newtheorem{corollary}{Corollary}
\newtheorem{conjecture}{Conjecture}
\newtheorem{question}{Question}
\newtheorem*{Littman}{Littman}
\newtheorem{Magyar}{Magyar}
\theoremstyle{definition}
\newtheorem{example}{Example}
\theoremstyle{remark}
\newtheorem{remark}{Remark}[section]
\title{$\ell^p$-improving for discrete spherical averages}
\author{Kevin Hughes}
\address{
    School of Mathematics
	\\	The University of Bristol
	\\	Howard House
	\\	Queens Avenue
	\\	Bristol, BS8 1TW
	\\	UK
	\\ and the Heilbronn Insitute for Mathematical Research, Bristol, UK
}
\email{khughes.math@gmail.com}
\begin{document}
\maketitle
%
%

\begin{abstract}
We initiate the theory of $\ell^p$-improving inequalities for arithmetic averages over hypersurfaces and their maximal functions. 
In particular, we prove $\ell^p$-improving estimates for the discrete spherical averages and some of their generalizations. 
As an application of our $\ell^p$-improving inequalities for the dyadic discrete spherical maximal function, we give a new estimate for the full discrete spherical maximal function in four dimensions. 
Our proofs are analogous to Littman's result on Euclidean spherical averages. 
One key aspect of our proof is a Littlewood--Paley decomposition in both the arithmetic and analytic aspects. 
In the arithmetic aspect this is a major arc-minor arc decomposition of the circle method. 
\end{abstract}

\section{Introduction}

The motivation for this paper is Littman's $L^p(\R^d)$-improving result for spherical averages from \cite{Littman}. 
For dimensions $d \geq 2$ and functions $f : \R^d \to \C$ define the spherical average (over the unit sphere) by 
\begin{equation*}
\mathcal{A}f(\vector{x}) := \int_{\mathbb{S}^{d-1}} f(\vector{x}-\vector{y}) \; d\sigma(\vector{y})
\end{equation*}
where $d\sigma$ is the Euclidean surface measure on the unit sphere $\mathbb{S}^{d-1}$ in $\R^d$. 
\begin{Littman}
If $\mathcal{A}$ is the averaging operator over the unit sphere, then 
\begin{equation*}\label{eq:Littman}
\|\mathcal{A}f\|_{L^{d+1}(\R^d)} \leq C \|f\|_{L^{\frac{d+1}{d}}(\R^d)}.
\end{equation*}
\end{Littman}
\noindent 
In this note we will be interested in estimates for the discrete spherical averages which are analogous to \eqref{eq:Littman}. 
Suppose that $d \geq 2$.
For $\lambda \in \N$ and functions $f : \Z^d \to \C$, define the discrete spherical averages
\[
A_\lambda f(\vector{x}) := N_d(\lambda)^{-1} \sum_{\vector{y} \in \Z^d : |\vector{y}|^2=\lambda} f(\vector{x}-\vector{y})
\]
whenever $N_d(\lambda) := \#\{ \vector{y} \in \Z^d : |\vector{y}|^2=\lambda \}$ is not zero. 
In other words, $A_\lambda$ is the linear operator given by convolution with the discrete (or more appropriately named ``arithmetic'') probability measure 
\[
\sigma_\lambda := N_d(\lambda)^{-1} {\bf 1}_{\{ \vector{y} \in \Z^d : |\vector{y}|^2=\lambda \}}.
\]

Our main result is the following. 
\begin{theorem}\label{theorem:discrete_improving}
If $d \geq 4$ and $\frac{d+1}{d-1} \leq p \leq 2$, then for each $\epsilon>0$, there exists constants $C_{p,\epsilon}$ depending on $p$ and $\epsilon$ such that for all $\lambda \in \N$ (further restrict $\lambda$ to be odd when $d=4$), we have the $\ell^p$-improving inequality
\begin{equation}\label{eq:discrete_improving}
\|A_\lambda f\|_{\ell^{p'}(\Z^d)} 
\leq C_{p,\epsilon} 
\lambda^{\epsilon-\frac{d}{2}(\frac{2}{p}-1)} \|f\|_{\ell^{p}(\Z^d)}.
\end{equation}
The implicit constants are independent of $\lambda$. 
\end{theorem}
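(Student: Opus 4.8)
The plan is to obtain the full range in \eqref{eq:discrete_improving} from a single endpoint estimate by interpolating against the contraction inequality \eqref{eq:contraction}. Set $p_0 := \frac{d+1}{d-1}$, so that $p_0' = \frac{d+1}{2}$. Riesz--Thorin interpolation of the endpoint bound
\begin{equation*}
\|A_\lambda f\|_{\ell^{p_0'}(\Z^d)} \leq C_\epsilon\, \lambda^{-\frac{d(d-3)}{2(d+1)}+\epsilon}\,\|f\|_{\ell^{p_0}(\Z^d)}
\end{equation*}
with $\|A_\lambda f\|_{\ell^2(\Z^d)} \leq \|f\|_{\ell^2(\Z^d)}$ reproduces \eqref{eq:discrete_improving} for every $p \in [p_0,2]$ with exactly the exponent $\frac{d}{2}(\frac{2}{p}-1)$: writing $1/p = (1-\theta)/p_0 + \theta/2$ one checks that the decay is $(1-\theta)\frac{d(d-3)}{2(d+1)} = \frac d2(\frac2p-1)$. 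The endpoint exponent is itself the one dictated by Littman's inequality \eqref{eq:Littman}: scaling \eqref{eq:Littman} to the sphere of radius $R = \sqrt{\lambda}$ gives $\|\mathcal{A}_R g\|_{L^{p_0'}(\R^d)} \lesssim R^{\,d(1/p_0' - 1/p_0)}\|g\|_{L^{p_0}(\R^d)} = \lambda^{-\frac{d(d-3)}{2(d+1)}}\|g\|_{L^{p_0}(\R^d)}$, and $(1/p_0,1/p_0')$ is precisely the interpolant of the $L^2$ point with Littman's point $(\frac{d}{d+1},\frac{1}{d+1})$. So everything reduces to the endpoint $p = p_0$, and this is where the analogy with Littman is exploited.

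To prove the endpoint estimate I would run the Hardy--Littlewood circle method in the Magyar--Stein--Wainger form, supplemented by the Kloosterman refinement (needed to reach $d = 4$). Decompose $\sigma_\lambda = M_\lambda + E_\lambda$, where the main term has Fourier transform a sum, over rationals $a/q$ with $q \lesssim \sqrt{\lambda}$, of Gauss-sum-weighted translates of $\widehat{d\mu_{\sqrt{\lambda}}}\cdot\chi$ (here $d\mu_{\sqrt{\lambda}}$ is the normalized surface measure on the radius-$\sqrt{\lambda}$ sphere in $\R^d$ and $\chi$ a smooth cutoff), and $E_\lambda$ is the remaining error. The two pieces are controlled by entirely different mechanisms.

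For $M_\lambda$, group the rationals dyadically, $q \sim 2^s$, and transfer the $s$-th block to $\R^d$ by the Magyar--Stein--Wainger sampling/multiplier principle: the rationals with $q \sim 2^s$ are $\gtrsim 2^{-2s}$-separated on $\T^d$, so after modulating each to the origin and sampling, the block is dominated, up to the arithmetic weights, by convolution on $\R^d$ with a frequency-localized piece of $d\mu_{\sqrt{\lambda}}$. The dilated, localized form of Littman's estimate \eqref{eq:Littman} bounds each transferred block; summing over $s$ against the Gauss-sum gain $|g(a,q)/q|^d \lesssim q^{-d/2}$ together with the arithmetic multiplicity ($\lesssim q \lesssim 2^s$ residues $a$ and $\lesssim 2^s$ admissible $q$) leaves a geometric series in $s$. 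This series converges when $d \geq 5$, and when $d = 4$ provided $\lambda$ is odd --- in the latter case the missing decay is recovered from genuine cancellation in the complete exponential sums $\sum_a e(-a\lambda/q)\,g(a,q)^d$ (a Kloosterman/Sali\'e-type sum), which is available precisely because $N_4(\lambda) \eqsim \lambda$ for odd $\lambda$. The total is $\lesssim_\epsilon \lambda^{-\frac{d(d-3)}{2(d+1)}+\epsilon}$, the $\lambda^\epsilon$ absorbing the sum over $q$ up to $\sqrt{\lambda}$.

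The main obstacle is the error term $E_\lambda$, which I would control by interpolating two estimates. First, the trivial bound \eqref{eq:trivial_bound} applied separately to $\sigma_\lambda$ and to $M_\lambda$ yields the $\ell^1(\Z^d) \to \ell^\infty(\Z^d)$ estimate $\|E_\lambda * f\|_{\ell^\infty} \lesssim \lambda^{-\frac{d-2}{2}}\|f\|_{\ell^1}$ (and no smaller power of $\lambda$ is possible, since $\|\sigma_\lambda\|_{\ell^\infty} \eqsim \lambda^{-\frac{d-2}{2}}$ while $M_\lambda$ is pointwise smaller). Second, and this is the genuinely arithmetic and technically hardest input, one needs the $\ell^2(\Z^d) \to \ell^2(\Z^d)$ bound
\begin{equation*}
\|E_\lambda * f\|_{\ell^2} = \|\widehat{E_\lambda}\|_{L^\infty(\T^d)}\,\|f\|_{\ell^2} \lesssim_\epsilon \lambda^{-\frac{d-3}{4}+\epsilon}\,\|f\|_{\ell^2},
\end{equation*}
which simultaneously encodes the minor-arc Weyl-type cancellation for $\sum_{|y|^2 = \lambda} e(y\cdot\xi)$ and the size of the major-arc remainder in $\widehat{\sigma_\lambda} \approx \widehat{M_\lambda}$; proving it with the exponent $\frac{d-3}{4}$ --- sharp enough to reach $p_0$, and in particular to include $d = 4$ with $\lambda$ odd --- is where Weil's bound for Kloosterman sums enters through the Kloosterman/Heath--Brown refinement of the circle method. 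Interpolating these two estimates at $p = p_0$ (interpolation parameter $\theta_0 = \frac{4}{d+1}$) gives the exponent $(1-\theta_0)\frac{d-2}{2} + \theta_0\cdot\frac{d-3}{4} = \frac{d(d-3)}{2(d+1)}$, matching the main term; indeed $p_0 = \frac{d+1}{d-1}$ is exactly the smallest exponent for which this available $\ell^2$ decay of $E_\lambda$, interpolated against the trivial bound, still reaches the target. Combining the estimates for $M_\lambda$ and $E_\lambda$ yields the endpoint bound, and the theorem follows.
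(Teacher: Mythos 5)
Your proposal is correct and follows essentially the same architecture as the paper's argument: Magyar's major/minor arc decomposition $A_\lambda = M_\lambda + E_\lambda$, the Kloosterman refinement of Magyar giving $\|\widehat{E_\lambda}\|_{L^\infty(\T^d)} \lessapprox \lambda^{-(d-3)/4}$, the $\ell^1\to\ell^\infty$ kernel bound $\lesssim \lambda^{-(d-2)/2}$ for the main term, a dyadic grouping of moduli, and interpolation. Two organizational differences are worth flagging. First, you reduce to the single endpoint $p_0 = \frac{d+1}{d-1}$ and then Riesz--Thorin against the $\ell^2$ contraction; the paper instead runs the interpolation on each dyadic block at general $\theta = \frac{2}{p}-1$ and observes the $2^j$-exponent vanishes exactly at $\theta = \frac{d-3}{d+1}$, handling all $p \geq p_0$ in one pass. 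Both are fine, and your endpoint reduction is a modest simplification. Second, for $M_\lambda$ you transfer dyadic blocks to $\R^d$ and invoke the dilated Littman bound $\lambda^{-d(d-3)/(2(d+1))}$ directly, weighted by the Gauss-sum size $q^{-d/2}$; the paper instead interpolates the kernel $\ell^1\to\ell^\infty$ bound ($\sum_{q\sim 2^j}|\kernel_\lambda^q(\vector{x})| \lesssim 2^{2j}\lambda^{-d/2}$) against the Kloosterman--Weil $\ell^2\to\ell^2$ bound ($\lesssim 2^{j(3-d)/2+\epsilon}\lambda^\epsilon$). These are conceptually interchangeable --- the kernel bound is precisely the ingredient that proves scaled Littman --- so your phrasing is a legitimate reframing of the same estimate rather than a genuinely different proof.

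One small imprecision: for the main term at $d=4$, the Gauss bound $q^{-2}$ together with the trivial count of $\phi(q)\leq q$ residues already gives $\sum_{q\leq\sqrt{\lambda}} q^{-1} \lesssim \log\lambda$, which is absorbed by the $\lambda^\epsilon$; you do not actually need Kloosterman/Sali\'e cancellation in the complete sum $\sum_a e(-a\lambda/q)g(a,q)^d$ to close the main-term series (the paper does use the Weil bound there, but for the reason that it is needed to reach $p_0$ for $d\geq 5$, not because $d=4$ alone diverges badly). Where the Kloosterman refinement is genuinely indispensable --- and where you correctly emphasize it --- is in the error-term estimate $\|\widehat{E_\lambda}\|_{L^\infty} \lessapprox \lambda^{-(d-3)/4}$, which both pushes past the Birch exponent and makes $d=4$ with $\lambda$ odd accessible at all. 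With that caveat, the proof goes through.
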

\noindent 
We are also motivated by Lee's work \cite{Lee} which proved that the dyadic spherical maximal function variant of Littman's theorem holds. 
Moreover, our methods are flexible and we can use them to strengthen Theorem~\ref{theorem:discrete_improving} when $p$ is sufficiently large. 
\begin{theorem}\label{theorem:improving:dyadic}
If $d \geq 5$ and $p := \frac{d}{d-2}$, then there exists constants $C_{p}$ depending on $p$ such that for all $\Lambda \in \N$, we have the $\ell^p$-improving inequality for the $\Lambda$-dyadic discrete spherical maximal function
\begin{equation}\label{eq:discrete_improving}
\| \sup_{\Lambda \leq \lambda < 2\Lambda} |A_\lambda f| \|_{\ell^{p',\infty}(\Z^d)} 
\leq C_{p} 
\Lambda^{-\frac{d}{2}(\frac{2}{p}-1)} \|f\|_{\ell^{p,1}(\Z^d)}.
\end{equation}
The constant $C_p$ is independent of $\Lambda$. 
\end{theorem}
\noindent 
This clearly implies the same is true for a single average and hence we may remove the $\epsilon$-loss in Theorem~\ref{theorem:discrete_improving} when $p > \frac{d}{d-2}$.


\subsection{Motivation}

When $d \geq 5$ and $\lambda \in \N$, we have that $100^{-1} \lambda^{\frac{d-2}{2}} \leq N_d(\lambda) \leq 100 \lambda^{\frac{d-2}{2}}$; when $d=4$ and $\lambda$ is restricted to be odd, we have that $100^{-1} \lambda^{\frac{d-2}{2}}/\log \log \lambda \leq N_d(\lambda) \leq 100 \lambda^{\frac{d-2}{2}} \cdot \log \log \lambda$. 
Several years ago Jim Wright asked the author: \emph{What is $\ell^p$-improving for the discrete spherical averages?}
We interpret his question as the following. 
\begin{question}[Jim Wright]\label{question:improving}
When are there exponents $1 \leq p,q \leq \infty$ and a constant $C = C_{d,p,q}$, possibly depending on $d,p,q$ but independent of $\lambda$ such that 
\begin{equation}\label{eq:improving}
\|A_\lambda f\|_{\ell^{q}(\Z^d)} \leq C \|f\|_{\ell^{p}(\Z^d)}?
\end{equation}
\end{question}

On the one hand, unlike the continuous case in $\R^d$, we do not have the dilational symmetry to exploit; this is why we want \eqref{eq:improving} to hold uniformly for $\lambda \in \N$ instead of formulating the question for the unit sphere ($\lambda=1$) as stated in Littman's result. 
On the other hand, we may quickly obtain some trivial off-diagonal results by using the contraction inequality
\begin{equation}\label{eq:contraction}
\|A_\lambda f\|_{\ell^{p}(\Z^d)} \leq \|f\|_{\ell^{p}(\Z^d)} 
\quad \text{for all} \quad 1 \leq p \leq \infty
\end{equation}
and the nesting property of $\ell^p$-spaces
\begin{equation}\label{eq:nesting}
\|f\|_{\ell^q} \leq \|f\|_{\ell^p}
\quad \text{for all} \quad
1 \leq p \leq q \leq \infty,
\end{equation}
to see that \eqref{eq:improving} is true for all $\lambda$ for $1 \leq p \leq q \leq \infty$ with $C=1$. 
This makes Question~\ref{question:improving} trivial. 

We consider the following two examples; assume for simplicity that $d \geq 5$ and $\lambda \in \N$ is large. 
\begin{example}[$\delta$-function]\label{example:delta}
Take $f$ to be the delta function at the origin. 
Then $A_\lambda f$ is supported on the sphere of radius $\sqrt{\lambda}$ and has height $ N_d(\lambda)^{-1} \eqsim \lambda^{\frac{2-d}{2}}$. 
Thus, for $p,q \geq 1$, 
\[
\| A_\lambda f \|_{\ell^q(\Z^d)} 
= 
\| A_\lambda f \|_{\ell^\infty(\Z^d)} N_d(\lambda)^{\frac{1}{q}} 
=
N_d(\lambda)^{\frac{1}{q}-1} 
= 
N_d(\lambda)^{\frac{1}{q}-1} \|f\|_{\ell^p(\Z^d)} 
\eqsim 
\lambda^{-(\frac{d-2}{2})(1-\frac{1}{q})} \|f\|_{\ell^p(\Z^d)}.
\]
We have a similar bound for the dyadic maximal function
\[
\| \sup_{\Lambda \leq \lambda < 2\Lambda} |A_\lambda f| \|_{\ell^q(\Z^d)}^q 
= 
\sum_{\Lambda \leq \lambda < 2\Lambda} \sum_{\vector{x} \in \Z^d : |\vector{x}| = \lambda}  |A_\lambda f(\vector{x})|^q.
\]
Consequently, 
\[
\| \sup_{\Lambda \leq \lambda < 2\Lambda} |A_\lambda f| \|_{\ell^q(\Z^d)}^q 
\lesssim 
\Lambda \cdot \Lambda^{\frac{d-2}{2}} \cdot \Lambda^{-q(\frac{d-2}{2})} 
= 
\Lambda^{1+ (1-q)(\frac{d-2}{2})}. 
\]
Therefore, for $p,q \geq 1$, we have 
\[
\| \sup_{\Lambda \leq \lambda < 2\Lambda} |A_\lambda f| \|_{\ell^q(\Z^d)}
\lesssim 
\Lambda^{\frac{1}{q} + (\frac{1}{q}-1)(\frac{d-2}{2})} 
= 
\Lambda^{\frac{1}{q} + (\frac{1}{q}-1)(\frac{d-2}{2})} \|f\|_{\ell^p(\Z^d)}.
\]
\end{example}

\begin{example}[big ball-function]\label{example:bigball}
Take $f$ to be the indicator function of a ball of radius $R \eqsim \sqrt{\lambda}$. 
Then $A_\lambda f$ is supported on the ball of radius $R+ \sqrt{\lambda} \eqsim \sqrt{\lambda}$ and has height $\gtrsim 1$ for a large chunk of it. 
Therefore,
\[
\| A_\lambda f \|_{\ell^q(\Z^d)} \eqsim  \|f\|_{\ell^q(\Z^d)} \eqsim R^{d/q}
\]
for $q \geq 1$ while 
\[
\|f\|_{\ell^p(\Z^d)} \eqsim R^{d/p}.
\]
The same estimates hold with the dyadic maximal function $\sup_{\Lambda \leq \lambda < 2\Lambda} |A_\lambda f|$ in place of $A_\lambda f$. 
Here we immediately see that we must have {$q \geq p$ to satisfy \eqref{eq:improving}}. 
Combining these two we see that for $1 \leq p \leq 2$:
\[
\| \sup_{\Lambda \leq \lambda < 2\Lambda} |A_\lambda f| \|_{\ell^{p'}(\Z^d)} \eqsim  \Lambda^{-\frac{d}{2}(\frac{2}{p}-1)} \|f\|_{\ell^p(\Z^d)}.
\]
\end{example}

Here and throughout, $p'$ is the dual exponent to $p$ which means for $1 \leq p \leq \infty$ that $\frac{1}{p} + \frac{1}{p'} = 1$. 

Example~\ref{example:delta} actually reveals more since Young's inequality implies
\[
\|A_\lambda f\|_{\ell^\infty(\Z^d)} 
= 
\|\sigma_\lambda * f\|_{\ell^\infty(\Z^d)} 
\leq 
\|\sigma_\lambda\|_{\ell^\infty(\Z^d)} \|f\|_{\ell^1(\Z^d)} 
\leq 
100 \lambda^{-\frac{d-2}{2}} \|f\|_{\ell^1(\Z^d)}. 
\]
This bound extends to the dyadic maximal functions:
\[
\| \sup_{\Lambda \leq \lambda < 2\Lambda} |A_\lambda f| \|_{\ell^\infty(\Z^d)} 
\leq 
100 \Lambda^{-\frac{d-2}{2}} \|f\|_{\ell^1(\Z^d)}. 
\]

Interpolating this with the contraction inequality we obtain the following estimate
\begin{equation}\label{eq:trivial_bound}
\| \sup_{\Lambda \leq \lambda < 2\Lambda} |A_\lambda f| \|_{\ell^{p'}(\Z^d)} 
\leq 
100\Lambda^{-\frac{d-2}{2}(\frac{2}{p}-1)} \|f\|_{\ell^p(\Z^d)} 
\quad \text{for} \quad 1 \leq p \leq 2,
\end{equation}
which we call the \emph{trivial bound}.
For large $\lambda$ the trivial bound is larger than the bounds from our examples. 
%
Curiously, the trivial bound shows that there exist $\ell^p(\Z^d)$-improving estimates which decay with $\lambda$. 
Therefore a better interpretation of Jim Wright's question is the following.
\begin{question}\label{question:sharp}
For each $1<p<2$, what is the best exponent $\eta_p$ so that 
\begin{equation*}
\|A_\lambda f\|_{\ell^{p',\infty}(\Z^d)} 
\leq C_{p} 
\lambda^{-\eta_p} \|f\|_{\ell^{p}(\Z^d)}
\end{equation*}
where the constant $C_p$ is independent of $\lambda \in \N$ and $f \in \ell^p(\Z^d)$?
\end{question}

We also ask this question for the dyadic version. 
\begin{question}
For each $1<p<2$, what is the best exponent $\nu_p$ so that 
\begin{equation*}
\| \sup_{\Lambda \leq \lambda < 2\Lambda} |A_\lambda f| \|_{\ell^{p',\infty}(\Z^d)} 
\leq C_{p}' 
\Lambda^{-\nu_p} \|f\|_{\ell^{p}(\Z^d)}
\end{equation*}
where the constant $C_p'$ is independent of $\Lambda \in \N$ and $f \in \ell^p(\Z^d)$?
\end{question}

Theorem~\ref{theorem:discrete_improving} addresses Question~\ref{question:sharp} when we are close to $p=2$. 
In particular it says that in Question~\ref{question:sharp} we may take any $0 \leq \eta_p<\frac{d}{2}(\frac{2}{p}-1)$  for $\frac{d+1}{d-1} \leq p <2$. 
Comparing \eqref{eq:discrete_improving} to \eqref{eq:trivial_bound}, we see that Theorem~\ref{theorem:discrete_improving} improves upon the trivial bound. 
Recall that the trivial bound was given by a simple convexity estimate (in this case applying Young's inequality); consequently, Theorem~\ref{theorem:discrete_improving} might be referred to as a \emph{subconvexity estimate} in the argot of analytic number theorists. 

One can also ask for $\ell^p$-improving estimates for the full discrete spherical maximal function. 
For $f : \Z^d \to \C$, let 
\[
A_* f(\vector{x}) := \sup_{\lambda \in \N : \lambda \; \text{is odd}} |A_\lambda f(\vector{x})|
\]
denote the discrete spherical maximal function when $d=4$, 
and 
\[
A_* f(\vector{x}) := \sup_{\lambda \in \N} |A_\lambda f(\vector{x})|
\]
denote the discrete spherical maximal function when $d \geq 5$. 

\begin{question}
When are there exponents $1 \leq p,q \leq \infty$ so that 
\begin{equation}\label{eq:improving:maximal}
\|A_* f\|_{\ell^{q}(\Z^d)} \lesssim \|f\|_{\ell^{p}(\Z^d)}?
\end{equation}
\end{question}
When $d \geq 5$, there are some obvious ranges for which one may obtain bounds. 
In particular, by applying Magyar--Stein--Wainger's theorem on the discrete spherical maximal function, see "Theorem" in \cite{MSW}, we have that \eqref{eq:improving:maximal} holds for $d \geq 5$ and $\frac{d}{d-2} <q \leq \infty$ because
\[
\|A_*f\|_{\ell^q} \lesssim \|f\|_{\ell^q}
\]
for all $q > \frac{d}{d-2}$ and the nesting property of $\ell^p$-spaces implies that 
\[
\|A_*f\|_{\ell^{q}} \lesssim \|f\|_{\ell^{q}} \leq \|f\|_{\ell^p}
\]
for $1 \leq p \leq q$. 
Moreover this range is sharp; this can be seen by considering the delta-function example. 
In four dimensions, the analogous  Magyar--Stein--Wainger estimate is expected to hold. 
\begin{conjecture}
If $p>2$, then 
\begin{equation}
\|A_* f\|_{\ell^{p}(\Z^4)} 
\lesssim_{p} 
\|f\|_{\ell^{p}(\Z^4)}.
\end{equation}
\end{conjecture}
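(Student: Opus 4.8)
\emph{Plan.} Since this is stated as a conjecture, what follows is the route I would attempt rather than a complete argument. The plan is to run the Magyar--Stein--Wainger scheme for the full discrete spherical maximal function (see \cite{MSW}), isolate the one step where the hypothesis $d\geq5$ was used --- absolute summability of the arithmetic weights over the moduli --- and replace it by the Kloosterman/Ramanujan bounds, which are exactly strong enough in $d=4$. Using the second decomposition of Magyar recalled in Section~\ref{section:weakLittman} (valid in $d=4$ for odd $\lambda$), write $A_\lambda = M_\lambda + E_\lambda$; by the triangle inequality in $\ell^{p}$ it suffices to bound $\|\sup_\lambda|M_\lambda f|\|_{\ell^{p}}$ and $\|\sup_\lambda|E_\lambda f|\|_{\ell^{p}}$ separately for each $p>2$ (the case $p=\infty$ being trivial by the contraction inequality \eqref{eq:contraction}).

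First I would dispose of the error term. As noted in Section~\ref{section:maximal}, \cite{Magyar:distribution} yields the dyadic bound $\|\sup_{\Lambda\le\lambda<2\Lambda}|E_\lambda f|\|_{\ell^2}\lessapprox\Lambda^{-(d-3)/4}\|f\|_{\ell^2}$; interpolating this with the trivial $\ell^\infty\to\ell^\infty$ bound produces, for each $p>2$, a $\delta_p>0$ with $\|\sup_{\Lambda\le\lambda<2\Lambda}|E_\lambda f|\|_{\ell^{p}}\lessapprox\Lambda^{-\delta_p}\|f\|_{\ell^{p}}$, and summing the resulting geometric series over dyadic $\Lambda$ (which converges since $(d-3)/4=1/4>0$ in four dimensions) gives $\|\sup_\lambda|E_\lambda f|\|_{\ell^{p}(\Z^4)}\lesssim_p\|f\|_{\ell^{p}(\Z^4)}$ for every $p>2$. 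So the error term is unconditionally fine.

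The main term carries all the difficulty. Following the proof of Corollary~\ref{corollary:weakLittman}, decompose $M_\lambda=\sum_{j}N_\lambda^j$ into dyadic blocks of moduli $2^j\le q<2^{j+1}$ with $2^j\le\sqrt\lambda$. For a fixed block, $N_\lambda^j$ is, up to the arithmetic modulation $\eof{a(\lambda-|\vector x|^2)/q}$ and the Kloosterman weight $\Kloosterman(q,\lambda;\vector m)$, a rescaled copy of the Euclidean spherical average, so the Magyar--Stein--Wainger transference principle together with Stein's spherical maximal theorem (valid for $p>d/(d-1)=4/3$ when $d=4$) reduces the bound for $\|\sup_\lambda|N_\lambda^j f|\|_{\ell^{p}(\Z^4)}$ to an arithmetic estimate for the $j$-th dyadic sum of Kloosterman sums. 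The Weil bound \eqref{Weil_bound} gives $\sum_{2^j\le q<2^{j+1}}|\Kloosterman(q,\lambda;\vector m)|\lesssim_\epsilon 2^{j(3-d)/2+\epsilon}(q,\lambda)^{1/2}$, and summing over $j$ (using $\sum_{g\mid\lambda}g^{-1}\lesssim_\epsilon\lambda^\epsilon$ to absorb the coupling factor $(q,\lambda)^{1/2}$) would give the desired $\ell^{p}$ bound for $p>4/3$ --- except for a loss of $\lambda^\epsilon$ inside the supremum over $\lambda$. To remove this loss one should invoke the Ramanujan bound of \cite{Bourgain,Hughes_sparse}, which controls such sums of Kloosterman sums on average without the $\lambda^\epsilon$, and use the odd-$\lambda$ restriction to tame the prime $2$; one then has to re-insert this averaged bound into a \emph{pointwise} maximal estimate, for which the $\ell^p$-improving machinery of this paper --- in particular the dyadic estimate \eqref{eq:dyadic} --- should supply the needed orthogonality in $\lambda$.

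The hard part --- and the reason this remains a conjecture --- is precisely this last maneuver: the Ramanujan bound is an $L^2$- or average-type statement about sums of Kloosterman sums, whereas a maximal function demands pointwise-in-$\lambda$ control, and bridging the two without reintroducing a $\lambda^\epsilon$ (which, summed over the $\sim\log\lambda$ blocks and then over scales, would be fatal) is delicate. A related difficulty is that the Euclidean spherical maximal function carries no decay in the scale, so the arithmetic weights alone must make the sum over moduli converge; the Weil bound $q^{-(d-1)/2}$ is exactly on the borderline in $d=4$, whereas the weaker Gauss bound $q^{-(d-2)/2}=q^{-1}$ fails, its sum over $q$ diverging logarithmically --- which is why $d=4$ was excluded from the unconditional Magyar--Stein--Wainger range, and why the scheme above, to actually close, needs the full strength of the Ramanujan bound together with the $\epsilon$-removal techniques developed here and in \cite{Kesler}.
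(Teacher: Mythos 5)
This statement is labeled a conjecture in the paper, and the paper offers no proof of it, so there is no argument for yours to be measured against. You have recognized this and, rather than overclaim, sketched the natural line of attack and correctly located the obstruction: in $d=4$ the Gauss bound $q^{1-d/2}=q^{-1}$ sums divergently over moduli, the Weil/Kloosterman bound $q^{(1-d)/2+\epsilon}(q,\lambda)^{1/2}$ restores convergence but imports a $(q,\lambda)^{1/2}$ coupling and a $\lambda^\epsilon$ loss, and in a pointwise-in-$\lambda$ maximal estimate (unlike the $\ell^p$-improving theorems here, which carry explicit $\lambda$-decay) there is no slack to absorb it. Your disposal of the error term is fine, since $\Lambda^{-(d-3)/4+\epsilon}=\Lambda^{-1/4+\epsilon}$ still decays geometrically after interpolation with the trivial $\ell^\infty$ bound. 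One small slip: in your dyadic Weil display the index $q$ is bound by the sum on the left yet appears free in $(q,\lambda)^{1/2}$ on the right; the right side should instead carry a divisor sum over $g\mid\lambda$ or an already-absorbed $\lambda^\epsilon$ factor. With that caveat, your assessment of why this remains open matches the paper's framing.
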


Interestingly we can prove the following $\ell^p$-improving result when $d=4$ which would be a corollary of Conjecture~1 by the nesting properties of $\ell^p$-spaces. 
\begin{theorem}\label{theorem:4d}
Assume that $d=4$. 
If $q>2$ and $1 \leq p < q$, then
\begin{equation}\label{eq:maximal}
\|A_* f\|_{\ell^{q}(\Z^4)} 
\lesssim_{p,q} 
\|f\|_{\ell^{p}(\Z^4)}.
\end{equation}
\end{theorem}

\subsection{Comparison to recent works}

Our examples show that \eqref{eq:discrete_improving} fails for $p<\frac{d+2}{d}$. 
So, one might expect that \eqref{eq:discrete_improving} would hold for all $\frac{d+2}{d} \leq p \leq 2$ which would go beyond Theorem~\ref{theorem:discrete_improving}. 
Intriguingly, Kesler--Lacey \cite{Kesler} showed that \eqref{eq:discrete_improving} fails for $p<\frac{d+1}{d-1}$. 
Moreover \cite{Kesler} removed the $\epsilon$-loss in \eqref{eq:discrete_improving} for $\frac{d+1}{d-1} < p < \frac{d}{d-2}$. 

We encourage the reader to read Kesler--Lacey's interesting work \cite{Kesler} which appeared independently of this paper. 
Kesler--Lacey also considered $\ell^p(\Z^d)$-improving inequalities for discrete spherical averages. 
In \cite{Kesler} their focus lied upon using $\ell^p$-improving inequalities to deduce `sparse bounds' for the discrete spherical maximal function. 
The question of sparse bounds is not considered here. 
However, we prove $\ell^p$-improving inequalities for a broader class of averages which are not considered in \cite{Kesler}.

As the reader may compare, the ideas in \cite{Kesler} - for proving $\ell^p(\Z^d)$-improving inequalities for discrete spherical averages - are very similar to those here. 
Their methods may appear more complicated due to the use of the Ramanujan bound from \cite{Bourgain, Hughes_sparse}. 
We point out that the Ramanujan bound first appeared in Bourgain's work on restriction of the parabola to the integer lattice to remove an $\epsilon$-loss there, and the Ramanujan bound was first adapted to the context of spherical averages in my work on discrete spherical maximal functions over sparse subsequences of radii; see \cite{Hughes_sparse}. 

\subsection{Overview of the proofs}

For many discrete analogues in harmonic analysis not only are the statements of theorems analogous, but there is also an analogy between their proofs. 
We take a moment to describe this since this analogy does not appear to be explained in the literature. 

For many problems in Fourier analysis such as Littman's theorem, one decomposes an operator into `low' and `high' frequencies and obtains bounds for these different pieces. 
For instance the Littlewood--Paley square function is one way to do this. 
There is a similar decomposition for analogous problems with an arithmetic flavor. 

For problems over $\Z^d$ instead of $\R^d$, all frequencies in the torus $(\R/\Z)^d$ are ostensibly low frequencies since the torus is compact. 
Unfortunately, this perspective is insufficiently nuanced to treat many problems. 
Instead one should recalibrate to the following: replace the sobriquets `low frequencies' and `high frequencies' of Fourier analysis with `major arcs' and `minor arcs' of the circle method respectively. 

Recalibrating one's perspective to the above analogy allows us to import intuitions and paradigms from continuous Euclidean harmonic analysis to discrete Euclidean harmonic analysis via the circle method. 
By way of this analogy one sees that the circle method is akin to Littlewood--Paley theory. 
I encourage the reader to review the proofs of Littman's theorem before reading the proofs here to see this analogy in action. 


\subsection{Organization of the paper}

The paper is organized as follows. 
Section~\ref{section:notation} sets some notation used throughout the paper. 
Section~\ref{section:BirchMagyar} generalizes Theorem~1 to hypersurfaces defined by nice, positive definite homogeneous forms with integral coefficients. 
We prove Theorem~\ref{theorem:discrete_improving} in Section~\ref{section:sphere} by making use of improved estimates for Kloosterman sums. 
Section~\ref{section:sphere:dyadic} proves bounds for dyadic discrete $k$-spherical maximal functions. 
In Section~\ref{section:4d} we prove Theorem~\ref{theorem:4d}. 
Section~\ref{section:conclusion} concludes with a few questions. 
Finally, in the Appendix we record the range of $\ell^p$-boundedness of Magyar's maximal functions (which arise in Section~\ref{section:BirchMagyar}) from \cite{Magyar:ergodic}.

\section{Notation}\label{section:notation}

We introduce here some notation that will streamline our exposition. 
\begin{itemize}
\item
We write $f(\lambda) \lesssim g(\lambda)$ if there exists a constant $C>0$ independent of all $\lambda$ under consideration (e.g. $\lambda$ in $\N$ or in $\Gamma_{\form}$) such that 
\[
|f(\lambda)| \leq C |g(\lambda)|. 
\]
Furthermore, we will write $f(\lambda) \gtrsim g(\lambda)$ if $q(\lambda) \lesssim f(\lambda)$ while we will write $f(\lambda) \eqsim g(\lambda)$ if $f(\lambda) \lesssim g(\lambda)$ and $f(\lambda) \gtrsim g(\lambda)$
\item
Subscripts in the above notations will denote parameters, such as the dimension $d$ or degree $k$ of a form $\form$, on which the implicit constants may depend. 
\item
$\T^d$ denotes the $d$-dimensional torus $(\R/\Z)^d$ identified with the unit cube $[-1/2,1/2]^d$. 
\item
$*$ denotes convolution on a group such as $\Z^d$, $\T^d$ or $\R^d$. 
It will be clear from context as to which group the convolution takes place.
\item
$\eof{t}$ will denote the character $e^{-2\pi it}$ for $t \in \R$ or $\T$
\item
For a function $f: \Z^d \to \C$, its $\Z^d$-Fourier transform will be denoted $\widehat{f}(\xi)$ for $\xi \in \T^d$. 
For a function $f: \R^d \to \C$, its $\R^d$-Fourier transform will be denoted $\widetilde{f}(\xi)$ for $\xi \in \R^d$. 
\item
For a function $f: \R^d \to \C$, we define dilation operator $\dilate_{t}$ by $\dilate_{t}f(\vector{x}) = f(\vector{x}/t)$. 
\item
For a ring $R$, we will use the inner product notation $\vector{b} \cdot \vector{m}$ for vectors $\vector{b},\vector{m} \in R^d$ to mean the sum $\sum_{i=1}^d b_i m_i$. 
This is used for the rings $\R^,\Z,\T$ and $\Z/q$ where $q \in \Z$. 
\item
We also let ${\bf 1}_X$ denote the indicator function of the set $X$. 
\end{itemize}

\section{$\ell^p$-improving for Magyar's theorem}\label{section:BirchMagyar}

Our method is quite general, so we start by generalizing Theorem~\ref{theorem:improving:dyadic}. 
Throughout this section $\form(\vector{x}) \in \Z[x_1,x_2,\dots,x_d]$, where $\vector{x} = (x_1,x_2,\dots,x_d)$, will denote an integral, positive definite, homogeneous form, and $k$ will denote the degree of the form $\form$. 
We assume that $k \geq 2$ and a natural number. 
Let $V_\form(\C) := \{\vector{x} \in \C^d :  \nabla \form(\vector{x}) = \vector{0}\}$ denote the (Birch) \emph{singular locus} of the form $\form$. 
We will say that a homogeneous, integral form is \emph{non-singular} if it satisfies Birch's criterion:
\begin{equation}\label{Birch_criterion}
d-\dim_\C(V_\form(\C)) > (k-1)2^k.
\end{equation}
The notion of dimension `$\dim_\C(V_\form(\C))$' can be taken to be the algebraic dimension of the complex variety $V_\form(\C)$.

When \eqref{Birch_criterion} is satisfied, Birch \cite{Birch} tells us that there exists a positive constant $C_\form$ and an infinite arithmetic progression $\Gamma_{\form}$ in $\N$ depending on the form $\form$ so that 
\[
N_{\form}(\lambda) 
:= \#\{ \vector{n} \in \Z^d : \form(\vector{n}) = \lambda \} 
\geq C_{\form} \lambda^{\frac{d}{k}-1}
> 0
\quad \text{for all} \quad \lambda \in \Gamma_{\form}.
\]
Following Magyar \cite{Magyar:ergodic}, we will call any such arithmetic progression $\Gamma_\form$ a \emph{set of regular values for $\form$}. 
For each $\lambda \in \N$, $N_{\form}(\lambda)$ is finite because the hypersurface $\{ \vector{n} \in \R^d : \form(\vector{n}) = \lambda \} $ is defined by a positive definite form which implies that this hypersurface is compact. 
Consequently, the averages 
\[
A^\form_\lambda(\vector{x}) := N_{\form}(\lambda)^{-1} \sum_{\vector{n} \in \Z^d : \form(\vector{n})=\lambda} f(\vector{x}-\vector{n})
\]
make sense for all $\lambda \in \Gamma_{\form}$ and functions $f:\Z^d \to \C$. 
In this setting, our trivial bound \eqref{eq:trivial_bound} becomes
\begin{equation}\label{eq:Birch:trivial_bound}
\| \sup_{\Lambda \leq \lambda < 2\Lambda} |A^\form_\lambda f| \|_{\ell^{p'}(\Z^d)} 
\lesssim 
\Lambda^{-(\frac{d}{k}-1)(\frac{2}{p}-1)} \|f\|_{\ell^p(\Z^d)} 
\quad \text{for} \quad 1 \leq p \leq 2
\quad \text{and} \quad \Lambda \geq 1.
\end{equation}
This follows from Young's inequality and Birch's estimate for $N_\form(\lambda)$. 
In all of our dyadic maximal functions we will restrict to $\lambda \in \Gamma_\form$. 

For a non-singular, homogeneous, integral form define the parameters
\begin{equation}\label{eq:Birch:parameters}
\gamma_{\form} := \frac{1}{6k} \left( \frac{d-\dim(V_\form(\C))}{(k-1)2^k} - 1 \right)
\quad \text{and} \quad
\kappa_{\form} := \frac{d-\dim V_\form(\C)}{2^{k-1}(k-1)}.
\end{equation}
Throughout we assume that $d > k \geq2$ with $d$ sufficiently large with respect to $k$ to satisfy the Birch--Magyar non-singularity criterion \eqref{Birch_criterion} so that $\gamma_\form>0$ and $\kappa_\form>2$.
The following result gives an improvement over the trivial bound \eqref{eq:Birch:trivial_bound} when $p$ is close to 2.
\begin{theorem}\label{theorem:discrete_improving:Birch}
Let $\form$ be a positive definite, non-singular, homogeneous, integral form in $d$ variables of degree $k$ and $\Gamma_\form$ a set of regular values for $\form$. 
If $p := \frac{\kappa}{\kappa-1}$, then for each $\epsilon>0$ there exists a constant $C_{\epsilon,\form,p}$ independent of $\Lambda \geq 1$ so that 
\begin{equation}\label{eq:discrete_improving:Birch}
\|\sup_{\Lambda \leq \lambda < 2\Lambda : \lambda \in \Gamma_\form} |A^\form_\lambda f| \|_{\ell^{p'}(\Z^d)} 
\leq 
C_{\form,p,\epsilon} \Lambda^{\epsilon-\frac{d}{k}(\frac{2}{p}-1)} 
\left( 1 + \Lambda^{\frac{2}{p}-1 - \gamma_\form(2-\frac{2}{p})} \right)
\|f\|_{\ell^{p}(\Z^d)}
\end{equation}
for all $\Lambda \geq 1$. 
Recall that the supremum is restricted so that each $\lambda$ is in $\Gamma_\form$. 
\end{theorem}

Note that our assumption that $\kappa > 2$ implies that $\kappa/(\kappa-1) < 2$. 
Our theorem implies the following corollary which says that our dyadic maximal functions satisify essentially sharp $\ell^p(\Z^d) \to \ell^{p'}(\Z^d)$-improving estimates for $p \leq2$ with $p$ sufficiently close to 2. 
\begin{corollary}
Let $\form$ be a positive definite, non-singular, homogeneous, integral form in $d$ variables of degree $k$ and $\Gamma_\form$ a set of regular values for $\form$. 
If $\frac{2(1+\gamma_\form)}{1+2\gamma_\form} \leq p \leq 2$, then for each $\epsilon>0$ there exists a constant $C_{\epsilon,\form,p}$ independent of $\Lambda \geq 1$ so that 
\[
\|\sup_{\Lambda \leq \lambda < 2\Lambda : \lambda \in \Gamma_\form} |A^\form_\lambda f| \|_{\ell^{p'}(\Z^d)} 
\leq 
C_{\form,p,\epsilon} \Lambda^{\epsilon-\frac{d}{k}(\frac{2}{p}-1)} \|f\|_{\ell^{p}(\Z^d)}
\]
for all $\Lambda \geq 1$. 
\end{corollary}
Our corollary follows by determining when $\frac{2}{p}-1 - \gamma_\form(2-\frac{2}{p}) \leq 0$ and noting that $\gamma_\form < \kappa_\form$. 
This is a computation that we leave to the reader. 

The heavy lifting in our thoerem lies in a decomposition of Magyar for the averages $A^\form_\lambda$. 
\begin{Magyar}[\cite{Magyar:distribution}]\label{Magyar:Birch}
Let $\form(\vector{x}) \in \Z[\vector{x}]$ be a positive definite, non-singular, integral, homogeneous form, and $\Gamma_{\form}$ be a set of regular values for the form $\form$. 
For each $\lambda \in \Gamma_\form$ the averaging operator $A^\form_\lambda$ decomposes into the sum of two convolution operators, $A_\lambda = M^\form_\lambda+E^\form_\lambda$ such that 
for all $\Lambda \geq 1$, we have  
\begin{equation}\label{eq:error_bound}
\| \sup_{\Lambda \leq \lambda < 2\Lambda : \lambda \in \Gamma_\form} |E_\lambda| \|_{\ell^2(\Z^d) \to \ell^2(\Z^d)} 
\lesssim_{\form,\gamma} 
\Lambda^{\epsilon-\gamma_\form} 
\quad \text{for all} \quad \epsilon>0
\end{equation}
The implicit constant is independent of $\Lambda \geq 1$. 
The main term, $M^\form_\lambda$, is the sum of finitely many convolution operators, 
\[
M^\form_\lambda 
= 
\sum_{q=1}^{\Lambda^{1/k}} \sum_{a \in (\Z/q)^*} \eof{a\lambda/q} M^{\form,a/q}_\lambda,
\] where $\Lambda = 2^j$ satisfies $\Lambda/2 \leq \lambda < \Lambda$. 
Moreover, $M^\form_\lambda$ satisfies the estimate  
\begin{equation}\label{eq:Weyl_bound}
\| \sup_{\lambda \in \Gamma_\form} |M^{\form,a/q}_\lambda| \|_{\ell^2(\Z^d) \to \ell^2(\Z^d)} 
\lesssim_{\epsilon} 
q^{\epsilon-\kappa_\form} 
\quad \text{for all} \quad \epsilon>0.
\end{equation}
\end{Magyar}

We remark that $M^{\form,a/q}_\lambda$ is the convolution operator corresponding to the Fourier multiplier
\begin{equation*}
\widehat{M^{\form,a/q}_\lambda}(\vector{\xi}) 
:= 
\sum_{\vector{m} \in \Z^d} G_{\form}(a,q;\vector{m}) \Psi(q\vector{\xi}-\vector{m}) \widetilde{d\sigma_{\form}}(\lambda^{1/k}[\vector{\xi}-\frac{\vector{m}}{q}]).
\end{equation*}
For $\vector{m} \in \Z^d$ 
\[
G_\form(a,q;\vector{m}) 
:= 
q^{-d}\sum_{\vector{b} \in (\Z/q)^d} \eof{\frac{a\form(\vector{b})+\vector{b}\cdot\vector{m}}{q}}
\]
is the normalized Weyl sum satisfying the bound 
\begin{equation}\label{eq:Birch:Weyl_sum}
|G_\form(a,q;\vector{m})| \lesssim_\epsilon q^{\epsilon-\kappa}
\quad \text{for all} \quad \epsilon>0
\end{equation}
uniformly in $a \in (\Z/q)^*$ and $\vector{m} \in \Z^d$. 
The function $\Psi$ is a $C^\infty(\R^d)$ bump function supported in the cube $[-1/4,1/4]^d$ and 1 on the cube $[-1/8,1/8]^d$, and the singular measure $d\sigma_{\form}$ is the Gelfand--Leray form defined distributionally by the oscillatory integral 
\[
\int_{\R} \eof{t(\form(\vector{x})-1)} \; dt.
\]
Alternatively, $d\sigma_{\form}(\vector{x}) = dS_{\form}(\vector{x})/|\nabla \form(\vector{x})|$ where $dS_{\form}$ is the Euclidean surface area measure on the hypersurface $\{ \vector{x} \in \R^d : \form(\vector{x})=1 \}$ which is compactly supported since $\form$ is positive definite. 
See \cite{Magyar:ergodic} for more information concerning Gelfand--Leray measures of hypersurfaces. 
We cite the following bound - see Lemma~6 on page 931 of \cite{Magyar:ergodic} - for the $\R^d$-Fourier transform of the surface measure: 
\begin{equation}\label{eq:Birch:surface_measure}
|\widetilde{d\sigma_\form}(\vector{\xi})| 
\lesssim_\epsilon 
(1+|\vector{\xi}|)^{1-\kappa+\epsilon} 
\quad \text{for each} \quad \vector{\xi} \in \R^d
\quad \text{and for all} \quad \epsilon>0.
\end{equation}

The estimate \eqref{eq:error_bound} does not explicitly appear in \cite{Magyar:ergodic}. 
Instead it appears for a slightly different definition of our error term, so we briefly indicate how one obtains it. 
Estimate \eqref{eq:error_bound} is encoded in the proofs of Propositions~3 and 4 in \cite{Magyar:ergodic}. 
One key difference in this paper is that our main term $M^\form_\lambda$ is a finite sum depending on $\lambda$, and so we do not need (2.17) of Proposition~4 in \cite{Magyar:ergodic}. 
Meanwhile the estimates (2.15) and (2.16) of Proposition~4 in \cite{Magyar:ergodic} are superior to the minor arc estimate of Proposition~3 in \cite{Magyar:ergodic}. 
Therefore the minimal exponent which defines $\gamma_\form$ comes from the minor arc estimate.

Magyar's theorem gives us $\ell^2$-estimates, but we are interested in $\ell^p \to \ell^{p'}$-estimates. 
We will interpolate Magyar's $\ell^2$-estimates with appropriate $\ell^1 \to \ell^\infty$-estimates to deduce the following lemmas which when added together immediately yield Theorem~\ref{theorem:discrete_improving:Birch}. 
Our lemma for the main term is the following. 
\begin{lemma}\label{lemma:main:lp}
Let $\form(\vector{x}) \in \Z[\vector{x}]$ be a positive definite, non-singular, integral, homogeneous form satisfying \eqref{Birch_criterion}, and $\Gamma_{\form}$ be a set of regular values for the form $\form$. 
If $p := \frac{\kappa}{\kappa-1}$ (which is less than 2 by our assumption on $\kappa$), then 
\begin{equation}\label{eq:main_term} 
\| \sup_{\Lambda \leq \lambda < 2\Lambda} |M_\lambda| \|_{\ell^{p,1}(\Z^d) \to \ell^{p',\infty}(\Z^d)} 
\lesssim 
\Lambda^{\epsilon-\frac{d}{k}(\frac{2}{p}-1)} 
\quad \text{for all} \quad \Lambda \geq 1.
\end{equation}
\end{lemma}
\noindent 
Up to a factor of $\Lambda^\epsilon$ the bound for our main term is the size we expect for our averages in the range $\frac{\kappa}{\kappa-1} \leq p \leq 2$. 
Unfortunately the bound for the error term is much worse. 
\begin{lemma}\label{lemma:error:lp}
Let $\form(\vector{x}) \in \Z[\vector{x}]$ be a positive definite, non-singular, integral, homogeneous form satisfying \eqref{Birch_criterion}, and $\Gamma_{\form}$ be a set of regular values for the form $\form$. 
If $1 \leq p \leq 2$, then for all $\epsilon>0$, 
\begin{equation}\label{eq:error:lp:dyadic} 
\| \sup_{\Lambda \leq \lambda < 2\Lambda : \lambda \in \Gamma_\form} |E_\lambda| \|_{\ell^p(\Z^d) \to \ell^{p'}(\Z^d)} 
\lesssim_\epsilon 
\Lambda^{(1-\frac{d}{k})(\frac{2}{p}-1) - \gamma_\form(2-\frac{2}{p}) +\epsilon} 
\quad \text{for all} \quad \Lambda \geq 1.
\end{equation}
\end{lemma}

The proofs of these lemmas are motivated by proofs of Littman's theorem and its variants. 
Proofs of Littman's theorem often proceed by frequency decomposing the spherical average into pieces and finding $L^1 \to L^\infty$ and $L^2 \to L^2$ estimates with which to interpolate. 
In particular, we use a restricted weak-type argument that was used by Bourgain for the Euclidean spherical maximal function, by Ionescu \cite{Ionescu} for the discrete spherical maixmal function and also by Hu--Li for discrete restriction to the sphere in \cite{HuLi}. 

This is the same strategy that we follow in our proofs of these lemmas. 
In the next subsections we will deduce our lemmas. 

\subsection{Proof of Lemma~\ref{lemma:main:lp}}

Let $\kernel^{a/q}_\lambda$ denote the kernel (with domain $\Z^d$) associated to the convolution operator $M^{\form,a/q}_\lambda$. 
We start our proof by establishing an identity for these kernels. 
\begin{proposition}\label{proposition:main_term:l1tolinfty}
Let $\form(\vector{x}) \in \Z[\vector{x}]$ be a positive definite, non-singular, integral, homogeneous form satisfying \eqref{Birch_criterion}, and $\Gamma_{\form}$ be a set of regular values for the form $\form$. 
If $1 \leq a < q < \infty$ with $(a,q)=1$, then 
\begin{equation}\label{eq:kernel}
\kernel^{a/q}_\lambda(\vector{x}) 
= 
\eof{a\form(\vector{x})/q} \lambda^{-d/k} \widetilde{\dilate_{q\lambda^{-1/k}}\Psi}*d\sigma_{\form}(\lambda^{-1/k} \vector{x}) 
\quad \text{for} \quad \vector{x} \in \Z^d.
\end{equation}
\end{proposition}

\begin{proof}
Fix a form $\form$ satisfying the hypotheses of the proposition. 
We drop the dependence on $\form$ in our notation in order to simplify it. 

By Fourier inversion, our kernel is 
\begin{align*}
\kernel^{a/q}_\lambda(\vector{x}) 
& = 
\int_{\T^d} \eof{-\vector{x} \cdot \vector{\xi}} \sum_{\vector{m} \in \Z^d} G(a,q;\vector{m}) \Psi(q\vector{\xi}-\vector{m}) \widetilde{d\sigma}(\lambda^{1/k}[\vector{\xi}-\frac{\vector{m}}{q}]) \; d\vector{\xi}
\\ & = 
\sum_{\vector{m} \in \Z^d} G(a,q;\vector{m}) \int_{\T^d} \eof{-\vector{x} \cdot \vector{\xi}} \Psi(q\vector{\xi}-\vector{m}) \widetilde{d\sigma}(\lambda^{1/k}[\vector{\xi}-\frac{\vector{m}}{q}]) \; d\vector{\xi}
\\ & = 
\sum_{\vector{b} \in (\Z/q)^d} G(a,q;\vector{b}) \sum_{\vector{n} \in \Z^d} \int_{\T^d} \eof{-\vector{x} \cdot \vector{\xi}} \Psi(q\vector{\xi}-q\vector{n}-\vector{b}) \widetilde{d\sigma}(\lambda^{1/k}[\vector{\xi}-\vector{n}-\frac{\vector{b}}{q}]) \; d\vector{\xi}
\\ & = 
\sum_{\vector{b} \in (\Z/q)^d} G(a,q;\vector{b}) \int_{\R^d} \eof{-\vector{x} \cdot \vector{\xi}} \Psi(q\vector{\xi}-\vector{b}) \widetilde{d\sigma}(\lambda^{1/k}[\vector{\xi}-\frac{\vector{b}}{q}]) \; d\vector{\xi}
\end{align*}
The second equality follows since there is only a single term in the sum for each $\vector{\xi}$ while the third follows from writing every $\vector{m} \in \Z^d$ as $q\vector{n}+\vector{b}$ for some $\vector{n} \in \Z^d$ and a representative $\vector{b} \in \{0,1,\dots,q-1\}^d$ which we identify with $(\Z/q)^d$. 
Using the well-known translation, modulation and dilation symmetries of the (inverse) Fourier transform, we have 
\begin{align*}
\kernel^{a/q}_\lambda(\vector{x}) 
& = 
\sum_{\vector{b} \in (\Z/q)^d} G(a,q;\vector{b}) \int_{\R^d} \eof{-\vector{x} \cdot \vector{\xi}} \dilate_{q^{-1}}\Psi(\vector{\xi}-\frac{\vector{m}}{q}) \widetilde{\dilate_{\lambda^{-1/k}}d\sigma}(\vector{\xi}-\frac{\vector{m}}{q})
\\ & = 
\sum_{\vector{b} \in (\Z/q)^d} G(a,q;\vector{b}) \eof{-\frac{\vector{b} \cdot \vector{x}}{q}} \widetilde{\dilate_{q^{-1}\lambda^{1/k}}\Psi}*d\sigma(\lambda^{-1/k}\vector{x}).
\end{align*}
Identity \eqref{eq:kernel} immediately follows since 
\[
\sum_{\vector{b} \in (\Z/q)^d} G(a,q;\vector{b}) \eof{\frac{-\vector{b} \cdot \vector{x}}{q}}
=
\eof{\frac{a\form(\vector{x})}{q}}. 
\]
We leave this calculation to the reader since its just the inverse $(\Z/q)^d$-Fourier transform of the Gauss sum, and the Gauss sum is the $(\Z/q)^d$-Fourier transform of the function $\eof{\frac{a\form(\vector{x})}{q}}$.  
\end{proof}

Now that we know the structure of our kernel we will use a Littlewood--Paley decomposition and a circle method decomposition to arbitrage $\ell^1 \to \ell^\infty$ and $\ell^2 \to \ell^2$ estimates to deduce Lemma~\ref{lemma:main:lp}. 
In particular, the following lemma is motivated by the decompositions in \cite{Ionescu} when $k=2$ and \cite{Hughes_restricted} when $k \geq 3$. 
With this in mind, we introduce a low-high frequency decomposition in the analytic aspect. 

For $\Delta \in (0,1)$, define the low frequency piece by the Fourier multipler
\[
\widehat{M^{a/q,\low}_\lambda}(\vector{\xi}) 
:= \Psi(2q\Delta \lambda^{1/k}[\xi-\vector{m}/q]) \widehat{M^{a/q}_\lambda}(\vector{\xi}).
\]
The high frequency piece is defined as 
\[
M^{a/q,\high}_\lambda := M^{a/q}_\lambda - M^{a/q,\low}_\lambda.
\]
By the Fourier localization of $M^{a/q}_\lambda$ we have the restriction that $\Delta \gtrsim \lambda^{-1/k}$. 
\begin{lemma}\label{lemma:Birch:main_term}
Suppose that  $\Delta \in (0,1)$. 
Let $\form(\vector{x}) \in \Z[\vector{x}]$ be a positive definite, non-singular, integral, homogeneous form satisfying \eqref{Birch_criterion}, and $\Gamma_{\form}$ be a set of regular values for the form $\form$. 
If $1 \leq a < q \lesssim \Lambda^{1/k}$ with $(a,q)=1$, then each major arc piece $M^{a/q,\form} = M^{a/q,\form,\low} + M^{a/q,\form,\high}$ decomposes into a low frequency and high frequency piece such that 
\begin{equation}\label{eq:Birch:low}
\| \sup_{\Lambda \leq \lambda < 2\Lambda} | M^{a/q,\low}_\lambda| \|_{\ell^1(\Z^d) \to \ell^\infty(\Z^d)}
\lesssim 
(q\Delta)^{-1} \Lambda^{-\frac{d}{k}}
\end{equation}
and
\begin{equation}\label{eq:Birch:high}
\| \sup_{\Lambda \leq \lambda < 2\Lambda} |M^{a/q,\high}_\lambda| \|_{\ell^2(\Z^d) \to \ell^2(\Z^d)}
\lesssim_\epsilon
\Lambda^{\epsilon} q^{-\frac{3}{2}} \Delta^{\kappa-\frac{3}{2}}.
\end{equation}
\end{lemma}

\begin{proof}
The $\ell^2 \to \ell^2$-estimate is proved using Bourgain's $L^2$-estimates from \cite{Bourgain} as in \cite{Ionescu,Hughes_restricted}. 
We only sketch the proof of the $\ell^1 \to \ell^\infty$-estimate.

When $\form(\vector{x}) := \sum_{i=1}^d |x_i|^2$, then we have the following known bound for the continuous spherical measure
\begin{equation}\label{estimate:spherical}
\widetilde{\dilate_{t}\Psi}*d\sigma_{\form}(\vector{x}) 
\lesssim_d 
t^{-1} (1+|\vector{x}/t|)^{-2d}.
\end{equation}
See for instance page 1415 of \cite{Ionescu} where \eqref{estimate:spherical} is used to bound the discrete spherical maximal function, or (5.5.12) of \cite{Grafakos} for its derivation. 
The estimate \eqref{estimate:spherical} also holds for our varieties $\{ \vector{x} \in \R^d : \form(\vector{x}) = 1 \}$ since the proof only relies on the dimensionality of the measure $d\sigma_\form$. 
To be precise all we require is that 
\begin{equation}\label{eq:measure_dimension}
d\sigma(B_r) \lesssim \min\{ 1, r^{d-1} \}
\quad \text{for all balls of radius} \; r>0 \; \text{in} \; \R^d.
\end{equation}
This implies that for each $\vector{x} \in \Z^d$ we have 
\begin{align*}
\sup_{\Lambda \leq \lambda < 2\Lambda : \lambda \in \Gamma_\form} |M^{a/q}_\lambda f(\vector{x})|
& \lesssim_d 
|f| * [\sup_{\Lambda \leq \lambda < 2\Lambda : \lambda \in \Gamma_\form} (q\Delta)^{-1} \lambda^{-\frac{d}{k}} (1+|\lambda^{-1/k}\cdot|)^{-2d}] (\vector{x})
\\ & \lesssim_d 
|f| * [(q\Delta)^{-1} \Lambda^{-\frac{d}{k}} (1+|\Lambda^{-1/k}\cdot|)^{-2d}] (\vector{x})
\\ & \lesssim_d 
\| f \|_{\ell^1(\Z^d)} \cdot \| (q\Delta)^{-1} \Lambda^{-\frac{d}{k}} (1+|\Lambda^{-1/k}\cdot|)^{-2d}] \|_{\ell^\infty(\Z^d)}
\\ & \lesssim_d 
(q\Delta)^{-1} \Lambda^{-\frac{d}{k}} \cdot \| f \|_{\ell^1(\Z^d)} 
\end{align*}
\end{proof}

\begin{remark}
Estimates \eqref{eq:Birch:surface_measure} and \eqref{eq:measure_dimension} and a Euclidean version of the low--high decomposition suffice to prove Euclidean versions of Theorem~\ref{theorem:discrete_improving:Birch} generalizing Littman's theorem. 
\end{remark}

We now return to the proof of Lemma~\ref{lemma:main:lp}. 
Fix $\Lambda \geq 1$. 
Let $X$ be a fixed, finite subset of $\Z^d$. 
Our first observation is that estimate \eqref{eq:Birch:low} implies that for $\Lambda \leq \lambda < 2\Lambda$. 
\begin{equation}\label{eq:Birch:main:l1tolinfty}
 \sup_{\Lambda \leq \lambda < 2\Lambda} |M_\lambda \indicator{X}(\vector{x})| 
\lesssim 
\Lambda^{\frac{2-d}{k}} |X|
\quad \text{for all} \quad \vector{x} \in \Z^d
\end{equation}
by taking $\Delta \eqsim \Lambda^{-1/k}$ and summing over the moduli $1 \leq q \lesssim \Lambda^{1/k}$. 
The implicit bound is independent of the set $X$. 
Consquently we are reduced to proving 
\begin{equation}\label{eq:Birch:reduction}
|\{ \vector{x} \in \Z^d : \sup_{\Lambda \leq \lambda < 2\Lambda} |M_\lambda \indicator{X}(\vector{x})| > T \}| 
\lesssim_\epsilon  
\Lambda^{\frac{d(2-\kappa)}{k}+\epsilon}T^{-\kappa}|X|^{\kappa-1}
\quad \text{for} \quad
0 < T \lesssim \Lambda^{\frac{2-d}{k}} |X|.
\end{equation}

Since 
\begin{align*}
| \{ \vector{x} \in \Z^d : \sup_{\Lambda \leq \lambda < 2\Lambda} |M_\lambda f(\vector{x})| > T \} | 
& \leq 
| \{ \vector{x} \in \Z^d : \sup_{\Lambda \leq \lambda < 2\Lambda} |\sum_{q \leq Q} M^{q,\low}_\Lambda f(\vector{x})| > T/3 \} | 
\\ & \quad \quad + | \{ \vector{x} \in \Z^d : \sup_{\Lambda \leq \lambda < 2\Lambda} |\sum_{q \leq Q} M^{q,\high}_\Lambda f(\vector{x})| > T/3 \} | 
\\ & \quad \quad + | \{ \vector{x} \in \Z^d : \sup_{\Lambda \leq \lambda < 2\Lambda} |\sum_{q > Q} M^{q}_\Lambda f(\vector{x})| > T/3 \} |,
\end{align*}
the inequalities of Lemma~\ref{lemma:Birch:main_term} combine to imply that 
\begin{equation}\label{eq:Birch:maximal_restricted_estimate}
| \{ \vector{x} \in \Z^d : \sup_{\Lambda \leq \lambda < 2\Lambda} |M_\Lambda f(\vector{x})| > T \} | 
\lesssim_\epsilon 
\Lambda^{\epsilon} [Q^{1/2}\Delta^{\kappa-\frac{3}{2}} + Q^{2-\kappa}]^{2} T^{-2}|X|
\end{equation}
provided that we choose $Q$ and $\Delta$ such that
\(
Q \Delta ^{-1}\Lambda^{-d/k} |X|
\lesssim 
T.
\)

The inequality \eqref{eq:Birch:maximal_restricted_estimate} shows that we want to choose $Q^{1/2}\Delta^{\kappa-\frac{3}{2}} \eqsim Q^{2-\kappa}$ which is $\Delta \eqsim Q^{-1}$; we now make this assumption. 
Our restriction then takes the form
\(
Q^2 \Lambda^{-d/k} |X| 
\lesssim 
T
\) 
which is consistent with our reduction \eqref{eq:Birch:reduction}. 
Choosing $Q^2 \eqsim \Lambda^{d/k}T|X|^{-1}$, we deduce that 
\begin{align*}
| \{ \vector{x} \in \Z^d : \sup_{\Lambda \leq \lambda < 2\Lambda} |M_\lambda f(\vector{x})| > T \} | 
& \lesssim_\epsilon
\Lambda^{\epsilon} [\Lambda^{d/k}T|X|^{-1}]^{2-\kappa}T^{-2}|X|
\\ & \lesssim_\epsilon 
\Lambda^{\frac{d(2-\kappa)}{k}+\epsilon}T^{-\kappa}|X|^{\kappa-1}
\end{align*}
for all $0 < T \leq 1$ as desired. 
\subsection{Proof of Lemma~\ref{lemma:error:lp}}

The proof of Lemma~\ref{lemma:error:lp} follows by interpolating the error term estimate \eqref{eq:error_bound} in Magyar's theorem with estimate \eqref{eq:Birch:error:dyadic} below.

\begin{proposition}
We have the following $\ell^1 \to \ell^{\infty}$-improving estimate for the error term:
\begin{equation}\label{eq:Birch:error:dyadic}
\| \sup_{\Lambda \leq \lambda < 2\Lambda : \lambda \in \Gamma_\form} |E_\lambda f| \|_{\ell^\infty(\Z^d)}
\lesssim 
\Lambda^{1-\frac{d}{k}} \| f \|_{\ell^1(\Z^d)}. 
\end{equation}
\end{proposition}

\begin{proof}
Since $E_\lambda = A_\lambda - M_\lambda$, the trivial bound \eqref{eq:Birch:trivial_bound} for the dyadic maximal function and our bound \eqref{eq:Birch:main:l1tolinfty} for the main term imply \eqref{eq:Birch:error:dyadic}.
%
%
\end{proof}

\section{The discrete spherical averages}\label{section:sphere}

In this section we refine our main term analysis from Section~\ref{section:BirchMagyar} in order to prove Theorem~\ref{theorem:discrete_improving}. 
We now recall a decomposition of Magyar which, for the discrete spherical averages, improves upon the error term bound \eqref{eq:error_bound} in Magyar~\ref{Magyar:Birch}. 
\begin{Magyar}[\cite{Magyar:distribution}]
Suppose that $\form(\vector{x}) = x_1^2 + \cdots + x_d^2$ for $d \geq 4$.
For each $\lambda \in \N$ (with $\lambda$ assumed to be odd when $d=4$) and all $\xi \in \T^d$, we have $A^\form_\lambda = M^\form_\lambda+E^\form_\lambda$ for $\lambda \in \N$ 
such that 
\begin{equation}\label{eq:error_bound:sphere}
\|\widehat{E_\lambda}\|_{L^\infty(\T^d)} 
\lesssim_\epsilon 
\lambda^{\epsilon-\frac{d-3}{4}}
\quad \text{for all} \quad \epsilon>0.
\end{equation}
The main term $M^\form_\lambda$ decomposes into a sum of pieces $M^\form_\lambda = \sum_{q=1}^{\Lambda^{1/2}} M^{\form,q}_\lambda$ where $\Lambda$ is the smallest dyadic integer (that is of the form $2^j$ for some $j \in \N$) such that $\Lambda \leq \lambda < 2\Lambda$ and $M^\form_\lambda$ has Fourier multiplier
\begin{align*}
\widehat{M_\lambda^{q}}(\vector{\xi})
:= 
\sum_{\vector{m} \in \Z^d}  \Kloosterman(q,\lambda;\vector{m}) \Psi(2q\sqrt{\lambda}[\vector{\xi}-\vector{m}/q]) \widetilde{d\sigma}(\sqrt{\lambda}[\vector{\xi}-\vector{m}/q]).
\end{align*}
Each piece of the main term satifies the following ``Weil bound'': 
\begin{equation}\label{eq:Weil}
\|\widehat{M_\lambda^q}\|_{L^\infty(\T^d)} 
\lesssim_\epsilon
q^{-(\frac{d-1}{2})+\epsilon} (q,\lambda)^{1/2}
\quad \text{for all} \quad \epsilon>0.
\end{equation}
\end{Magyar}

In \eqref{eq:Weil} and below $(a,b)$ represents the greatest common divisor of two integers $a,b$, and 
$\Kloosterman(q,\lambda;\vector{m})$ is the Kloosterman/Salie sum defined as 
\[
\Kloosterman(q,\lambda;\vector{m}) 
:= 
q^{-d} \sum_{a \in (\Z/q)^\times} \eof{-\frac{a\lambda}{q}} \sum_{\vector{b} \in (\Z/q)^d} \eof{\frac{a\form(\vector{b})+\vector{b}\cdot\vector{m}}{q}}
\]
for $q \in \N$ and $\vector{m} \in \Z^d$. 
For the estimate \eqref{eq:error_bound:sphere}, see (1.9) of Lemma~1 in \cite{Magyar:distribution}. 
When comparing, note that we have normalized our surface measure to be a probabilty measure. 
The estimate \eqref{eq:Weil} follows from the famous Weil bounds for Kloosterman sums:
\begin{equation*}\label{Weil_bound}
\Kloosterman(q,\lambda;\vector{m}) 
\lesssim_\epsilon
q^{-(\frac{d-1}{2})+\epsilon} (q,\lambda)^{1/2} 
\quad \text{for each} \quad \vector{m} \in \Z^d 
\quad \text{and for all} \quad \epsilon>0. 
\end{equation*}
Also note that $M^q_\lambda = \sum_{a \in (\Z/q)^*} M^{a/q}_\lambda$. 

Our strategy is the same as before; we need to prove bounds for the main term and the error term. 
We first improve our bounds for the main term. 
\begin{lemma}\label{lemma:sphere:main}
For $\form(\vector{x}) := \sum_{i=1}^d x_i^2$, $d \geq 4$ and $p = \frac{d+1}{d-1}$, 
\begin{equation}
\| M_\lambda\|_{\ell^{p,1}(\Z^d) \to \ell^{p',\infty}(\Z^d)} 
\lesssim_\epsilon
\lambda^{\epsilon-\frac{d}{2}(\frac{2}{p}-1)}
\quad \text{for all} \quad \epsilon>0.
\end{equation}
\end{lemma}

Subsequently, we improve our bound for the error term. 
\begin{lemma}\label{lemma:sphere:error}
When $\form(\vector{x}) := \sum_{i=1}^d x_i^2$, $d \geq 4$ and $1 \leq p \leq 2$. 
\begin{equation}
\| E_\lambda \|_{\ell^p(\Z^d) \to \ell^{p'}(\Z^d)} 
\lesssim_\epsilon
\lambda^{-\frac{d}{2}(\frac{2}{p}-1) + (\frac{2}{p}-1)-(\frac{d-3}{4})(2-\frac{2}{p}) +\epsilon}
\quad \text{for all} \quad \epsilon>0.
\end{equation}
\end{lemma}

A simple computation reveals that $(\frac{2}{p}-1)-(\frac{d-3}{4})(2-\frac{2}{p}) < 0$ for $p > \frac{d+1}{d-1}$. 
Theorem~\ref{theorem:discrete_improving} follows immediately from combining Lemmas~\ref{lemma:sphere:main} and \ref{lemma:sphere:error}.

\subsection{Proof of Lemma~\ref{lemma:sphere:main}}

We use the low-high decomposition of Section~\ref{section:BirchMagyar}.
First we have 
\[
\| \sum_{Q<q \lesssim \lambda^{1/2}} M^{q}_\lambda \|_{\ell^2(\Z^d) \to \ell^2(\Z^d)} 
\lesssim_{\epsilon}
Q^{\epsilon-\frac{d-3}{2}} \lambda^{\epsilon}
\]
because summing over $(q,\lambda)^{1/2}$ only contributes a factor of $\lambda^{\epsilon}$ on average. 
Fourier transform estimates for the spherical measure and the Weil bound for Kloosterman sums \eqref{eq:Weil} implies the following bound on the high frequency pieces
\begin{align*}
\| M^{q,\high}_\lambda \|_{\ell^2(\Z^d) \to \ell^2(\Z^d)}
& \lesssim_\epsilon
q^{\epsilon-\frac{d-1}{2}} (q,\lambda)^{1/2} (q\Delta)^{-\frac{d-1}{2}}
= 
q^{\epsilon} (q,\lambda)^{1/2} \Delta^{-\frac{d-1}{2}}.
\end{align*}
Finally the low frequency bound is the same as before: 
\begin{align*}
\| M^{a/q,\low}_\lambda \|_{\ell^1(\Z^d) \to \ell^\infty(\Z^d)}
& \lesssim
(q\Delta)^{-1} \lambda^{-\frac{d}{2}}.
\end{align*}

The proof now proceeds by the restricted weak-type argument of Section~\ref{section:BirchMagyar}. 
We will be brief in our description of this. 
Let $X$ be a fixed subset of $\Z^d$. 
By the same reduction as in the proof of Lemma~\ref{lemma:main:lp}, we seek to prove
\begin{equation*}
|\{ |M_\lambda \indicator{X}(\vector{x})| > T \}| 
\lesssim_\epsilon 
\lambda^{-\frac{d(d-3)}{4}+\epsilon} T^{-\frac{d+1}{2}} |X|^{\frac{d-1}{2}} 
\quad \text{for} \quad 
0 < T \lesssim \lambda^{1-\frac{d}{2}} |X|.
\end{equation*}
Combining the above estimates for low pieces $M_\lambda^{q,\low}$ and high pieces $M_\lambda^{q,\high}$ implies that 
\begin{align*}
| \{ \vector{x} \in \Z^d : M_\Lambda f(\vector{x}) > T \} | 
& \lesssim_\epsilon 
\lambda^{\epsilon} \left( [Q\Delta^{\frac{d-1}{2}}]^{2}  + Q^{3-d} \right) T^{-2}|X|
\end{align*}
provided that we choose $Q$ and $\Delta$ such that \( Q \Delta ^{-1}\lambda^{-d/2} |X| \lesssim T\). 
We choose $\Delta \eqsim Q^{-1}$ so that \( Q^2 \lambda^{-d/2} |X| \eqsim T \). 
Plugging this in we ascertain that 
\begin{align*}
| \{ \vector{x} \in \Z^d : M_\lambda f(\vector{x}) > T \} | 
& \lesssim_\epsilon
\lambda^{\epsilon} [\lambda^{d/2}T|X|^{-1}]^{-\frac{d-3}{2}} \cdot T^{-2}|X|
\\ & =
\lambda^{-\frac{d(d-3)}{4}+\epsilon} T^{-\frac{d+1}{2}} |X|^{\frac{d-1}{2}}
\end{align*}
for all $0<T\leq1$ as desired.
%

\subsection{Proof of Lemma~\ref{lemma:sphere:error}}

We have Magyar's Kloosterman bound \eqref{eq:error_bound:sphere} which says that we may take $\gamma_{\form} := \frac{d-3}{4}$ for $\form(\vector{x}) := \sum_{i=1}^d |x_i|^2$ with $d \geq 4$. 
Interpolating with the $\ell^1(\Z^d) \to \ell^\infty(\Z^d)$-bound \eqref{eq:Birch:error:dyadic}, we obtain the lemma.

\section{The dyadic discrete $k$-spherical maximal functions}\label{section:sphere:dyadic}

Let $\form(\vector{x}) := \sum_{i=1}^d |x_i|^k$ for integers $k \geq 2$, and $\Gamma_\form$ a set of regular values for $\form$. 
(For these forms, $\Gamma_\form$ contains all sufficiently large natural numbers when $d \gtrsim k^2$.) 
For degrees $k \geq 3$, define the dimensions
\[
d_k 
:= 
k^2 - \max_{2 \leq j \leq k-1} \left\{ \frac{k j - \min(2^j+2,j^2+j)}{j-j+1} \right\},
\]
and the exponents 
\[
p_{d,k} 
:= 
\max\{ \frac{d}{d-k}, 1+\frac{1}{2\gamma_{d,k}} \}
\quad \text{for} \quad k \geq 3 \quad \text{and} \quad d > d_k
\]
where $\gamma_{d,k}$ is defined as 
\[
k \cdot \gamma_{d,k}
:= 
\begin{cases} 
(d-d_k)(k^2+k-d_k)^{-1} & \text{if } d_k \leq d \leq k^2+k, 
\\ 1 + (d-k^2-k)(\max\{ 2^{1-k},(k^2-k)^{-1} \} ) &\text{if } d > k^2+k. 
\end{cases} 
\]
When $k=2$, define $d_2 := 4$, $\gamma_{d,2} := 1-\frac{d}{4}$ and $p_{d,2} := \frac{d}{d-2}$. 
These bounds relied on the works \cite{Bourgain:Vinogradov,BDG,BrudernRobert,Wooley:asymptotic}. 
%

For $k$-spheres we may generalize Theorem~\ref{theorem:improving:dyadic} to the following. 
\begin{theorem}\label{theorem:kspheres}
For $k \geq 2$, $d \geq d_k$ and $p > p_{d,k}$, 
\begin{equation}
\|\sup_{\Lambda \leq \lambda < 2\Lambda} |A_\lambda|\|_{\ell^p(\Z^d) \to \ell^{p'}(\Z^d)} 
\lesssim
\Lambda^{-\frac{d}{k}(\frac{2}{p}-1)}.
\end{equation}
\end{theorem}

In the proof of this theorem we will see that we may easily deduce the restricted weak-type bound claimed in Theorem~\ref{theorem:improving:dyadic}. 
As before we will break into two lemmas which handle the main term and error term respectively, and from which Theorem~\ref{theorem:kspheres} follows immediately. 

\begin{lemma}\label{lemma:kspheres:main}
For $k \geq 2$, $d \geq 2k+1$ and $p := \frac{d}{d-k}$, 
\begin{equation}
\|\sup_{\Lambda \leq \lambda < 2\Lambda} |M_\lambda|\|_{\ell^{p,1}(\Z^d) \to \ell^{p',\infty}(\Z^d)} 
\lesssim
\Lambda^{-\frac{d}{k}(\frac{2}{p}-1)}.
\end{equation}
\end{lemma}

\begin{lemma}\label{lemma:kspheres:error}
For $k \geq 2$, $d \geq d_k$ and $p > p_{d,k}$, 
\begin{equation}
\|\sup_{\Lambda \leq \lambda < 2\Lambda} |E_\lambda|\|_{\ell^p(\Z^d) \to \ell^{p'}(\Z^d)} 
\lesssim_\epsilon
\Lambda^{-\frac{d}{k}(\frac{2}{p}-1) + (\frac{2}{p}-1)-\gamma_{d,k}(2-\frac{2}{p}) +\epsilon}.
\end{equation}
When $k=2$ one may remove the $\epsilon$-loss.
\end{lemma}

\subsection{Proof of Lemma~\ref{lemma:kspheres:main}}

We first handle the main term which relies on \emph{Steckin's estimate} (see \cite{Hughes_restricted}): 
\begin{equation}\label{eq:Steckin}
G_\form(a,q;\vector{m}) 
\lesssim_{k}
q^{-d/k}
\end{equation}
where the bound is uniform in $a$ coprime to $q$ and $\vector{m} \in \Z^d$, 
and Bruna--Nagel--Wainger's estimates - see \cite{BNW} - imply that 
\[
|\widetilde{d\sigma_{\form}}(\vector{\xi})| \lesssim (1+|\vector{\xi}|)^{-\frac{d-1}{k}}. 
\]
The effect of the following is that we may replace $\kappa_\form$ in Section~\ref{section:BirchMagyar} with $d/k$. 
An essential point below is that we have no $\epsilon$-loss; that is, no extraneous factors of $\Lambda^{\epsilon}$ like before. 

We first sharpen the bound for our main term. 
As shown in \cite{MSW} and \cite{Hughes_restricted}, the Gauss bound for Gauss sums and Steckin's estimate \eqref{eq:Steckin} implies that 
\begin{align*}
\| \sup_{\Lambda \leq \lambda < 2\Lambda : \lambda \in \Gamma_\form} |M^{a/q}_\lambda f| \|_{\ell^2(\Z^d)} 
& \lesssim 
q^{-\frac{d}{k}} \|f\|_{\ell^2(\Z^d)}.
\end{align*}
Consequently, 
\begin{align*}
\| \sup_{\Lambda \leq \lambda < 2\Lambda : \lambda \in \Gamma_\form} |\sum_{Q < q \lesssim \Lambda^{1/2}} \sum_{a \in (\Z/q)^*} M^{a/q}_\lambda f| \|_{\ell^2(\Z^d)} 
& \lesssim 
Q^{2-\frac{d}{k}} \|f\|_{\ell^2(\Z^d)}.
\end{align*}

Using the low-high decomposition of Section~\ref{section:BirchMagyar}, we have still have \eqref{eq:Birch:low} for the low-frequency piece and for the high-frequency piece we now have 
\begin{equation*}
\| \sup_{\Lambda \leq \lambda < 2\Lambda} |M^{a/q,\high}_\lambda| \|_{\ell^2(\Z^d) \to \ell^2(\Z^d)}
\lesssim
q^{-d/k} (q\Delta)^{\frac{d-1}{k}-\frac{1}{2}}
= 
q^{-(\frac{1}{k}+\frac{1}{2})} \Delta^{\frac{d-1}{k}-\frac{1}{2}}.
\end{equation*}
%
%
Running the restricted weak-type argument as before we conclude the lemma.

\subsection{Proof of Lemma~\ref{lemma:kspheres:error}}

We now handle the error term. 
From \cite{MSW} and \cite{ACHK}, we have that 
\begin{equation}\label{eq:sphere:error:dyadic}
\|\sup_{\Lambda \leq \lambda < 2\Lambda} |E_\lambda f|\|_{\ell^2} 
\lesssim_\epsilon
\Lambda^{\epsilon-\gamma_{d,k}} \|f\|_{\ell^2}
\end{equation}
where one may remove the $\epsilon$ in \eqref{eq:sphere:error:dyadic} when $k=2$. 
Interpolating with the $\ell^1(\Z^d)$-bound from Section~\ref{section:BirchMagyar} (see the proof of Lemma~\ref{lemma:error:lp}), we obtain the lemma. 


%

\section{The discrete spherical maximal function in four dimensions}\label{section:4d}

In this section let $\form(\vector{x}) := x_1^2+x_2^2+x_3^2+x_4^2$. 
We have the following $\ell^p(\Z^d)$-improving estimates for Magyar's discrete dyadic spherical maximal functions. 
\begin{theorem}\label{theorem:4d:dyadic}
If $d = 4$, then for each $q>2$ and $1 \leq p < q$ there exists $\delta_{p,q} > 0$ such that 
\begin{equation}\label{eq:4d:dyadic}
\| \sup_{\Lambda \leq \lambda < \Lambda} |A_\lambda f|  \|_{\ell^{q}(\Z^d)} 
\lesssim 
\Lambda^{-\delta_{p,q}} \|f\|_{\ell^p(\Z^d)}.
\end{equation}
We assume that the $\lambda$ in the supremum are restricted to be odd. 
\end{theorem}
\noindent 
Theorem~\ref{theorem:4d} follows from summing up Theorem~\ref{theorem:4d:dyadic} over $\Lambda = 2^j$ for integers $j \geq 0$. 

\begin{proof}[Proof of Theorem~\ref{theorem:4d:dyadic}]
Our trivial bound \eqref{eq:trivial_bound} says that 
\[
\|\sup_{\Lambda \leq < \lambda < 2\Lambda} |A_\lambda f|\|_{\ell^\infty(\Z^4)} 
\lesssim 
\Lambda^{-1} \|f\|_{\ell^1(\Z^4)}.
\]
We need one additional ingredient which is from the author's thesis.
\begin{theorem}[\cite{Hughes_thesis}]\label{theorem:l2}
For all $\lambda>0$, we have 
\[
\| \sup_{\Lambda \leq \lambda < 2\Lambda} |A_\lambda f| \|_{\ell^2(\Z^4)} 
\lesssim (\log \Lambda)^2 \|f\|_{\ell^2(\Z^4)}.
\]
The implicit constant is independent of $\Lambda$.
\end{theorem}

Interpolating our trivial bound and Theorem~\ref{theorem:l2} we obtain that 
\begin{equation}\label{eq:dyadic:weak}
\| \sup_{\Lambda \leq \lambda < \Lambda} |A_\lambda f|  \|_{\ell^{p'}(\Z^4)} 
\lesssim
\Lambda^{-(\frac{2}{p}-1)} (\log \Lambda)^2 \|f\|_{\ell^p(\Z^4)}
\quad \text{for} \quad 1 \leq p \leq 2.
\end{equation}
The nesting property of $\ell^p$-spaces implies that 
\[
\| \sup_{\Lambda \leq \lambda < 2\Lambda} |A_\lambda f| \|_{\ell^{q}(\Z^4)} 
\lesssim
\Lambda^{-(\frac{2}{q'}-1)} (\log \Lambda)^2 \|f\|_{\ell^p(\Z^4)}
\quad \text{for all} \quad 
q \geq 2
\quad \text{and} \quad 
1 \leq p \leq q'
\]
which implies Theorem~\ref{theorem:4d:dyadic} in this range. 
Interpolating \eqref{eq:dyadic:weak} with the trivial $\ell^\infty$-bound
\[
\| \sup_{\Lambda \leq \lambda < 2\Lambda} |A_\lambda f| \|_{\ell^\infty(\Z^4)} 
\leq 
\|f\|_{\ell^\infty(\Z^4)},
\]
we obtain for each $q \geq 2$ that 
\[
\| \sup_{\Lambda \leq \lambda < 2\Lambda} |A_\lambda f| \|_{\ell^{s}(\Z^4)} 
\lesssim
[\Lambda^{-(\frac{2}{p}-1)} (\log \Lambda)^2]^{q'/r} \|f\|_{\ell^{r}(\Z^4)}
\quad \text{for all} \quad 
r \geq q
\quad \text{and} \quad 
s = \frac{q'}{rq}.
\]
Consequently, \eqref{eq:4d:dyadic} holds for all $q \geq 2$ and $p \geq 2$. 
\end{proof}

\section{Further questions}\label{section:conclusion}

Let $X$ be a finite subset of $\Z^d$ and $\mu_X := |X|^{-1} {\bf 1}_X$ be its normalized probability measure. 
By Young's inequality, we have 
\begin{equation}\label{equation:Young:general}
\|f*\mu_X\|_{\ell^{p'}(\Z^d)}
\lesssim 
|X|^{1-\frac{2}{p}} \|f\|_{\ell^p(\Z^d)}.
\end{equation}
For instance if $X = X_r$ is taken to be the integer points of a large ball $\{ \vector{x} \in \Z^d : |\vector{x}| < r \}$ for $r>0$, then 
\[
\|f*\mu_X\|_{\ell^{p'}(\Z^d)}
\lesssim 
r^{d(1-\frac{2}{p})} \|f\|_{\ell^p(\Z^d)}
\]
since $\# \{ \vector{x} \in \Z^d : |\vector{x}| < r \} \eqsim r^d$ for all large $r>0$. 
Moreover testing against Example~\ref{example:bigball} (big ball example) we see that this bound is sharp (aside possibly from the implicit constants). 
Therefore \eqref{equation:Young:general} is in general sharp for all $1 \leq p \leq 2$, and another way to interpret Theorem~\ref{theorem:discrete_improving} is that the discrete sphere smooths almost as well as the discrete ball which contains the same number of points as the sphere in the range $\frac{d+1}{d-1} \leq p < 2$. 
This begs with a few questions. 
\begin{question}[Michael Fryers]
For a random subset $X$ what is the best constant in the inequality \eqref{equation:Young:general}?
\end{question}

\begin{question}[Michael Fryers]
Is \eqref{equation:Young:general} still true if we take $X$ to be a perturbation of the discrete spherical measures $\sigma_\lambda$?
\end{question}

\begin{question}
What are the extremizers satisfying \eqref{equation:Young:general} for all $1 \leq p \leq 2$? 
Do they resemble the ball?
\end{question}
%

\begin{appendix}
\section{$\ell^p(\Z^d)$-bounedness for Magyar's theorem}

Theorem~4 of \cite{Magyar:ergodic} is only stated for $\ell^2(\Z^d)$. 
Since it has come up in conversation on multiple occasions, we record the range of $\ell^p(\Z^d)$-boundedness to which Magyar's theorem extends and briefly indicate how. 
Standard density arguments extend the range of Magyar's pointwise ergodic theorem - Theorem~3 in \cite{Magyar:ergodic} - to the same range of $L^p$-spaces as $\ell^p(\Z^d)$-spaces below. 

In this appendix assume that $\form(\vector{x}) \in \Z[\vector{x}]$ is a non-singular, integral, homogeneous form with a set of regular values $\Gamma_\form$ satisfying \eqref{Birch_criterion}, define the maximal function for functions $f : \Z^d \to \C$ by
\[
A_* f(\vector{x}) 
:= 
\sup_{\lambda \in \Gamma_\form} |A_\lambda f(\vector{x})| 
\quad \text{for each} \quad \vector{x} \in \Z^d. 
\]
\begin{Magyar}
The maximal function $A_*$ is bounded on $\ell^p(\Z^d)$ for $p > \max\{ \frac{\kappa}{\kappa-1}, \frac{2(\gamma+1)}{2\gamma+1} \}$ where $\kappa = \kappa_\form$ and $\gamma = \gamma_\form$ are defined by \eqref{eq:Birch:parameters}.
\end{Magyar}

As in Section~\ref{section:BirchMagyar}, we have $A_\lambda = M_\lambda+E_\lambda$. 
This implies that for each function $f: \Z^d \to \C$ and each $\vector{x} \in \Z^d$
\[
A_* f(\vector{x}) 
\leq 
( \sum_{q=1}^\infty \sum_{a \in (\Z/q)^*} |M_*^{a/q} f(\vector{x})| ) 
+ E_*f(\vector{x})
\]
where 
\[
M_*^{a/q}f(\vector{x}) 
:= 
\sup_{\lambda \in \Gamma_\form} |M_\lambda^{a/q} f(\vector{x})|
\]

and 
\[
E_*f(\vector{x}) 
:= 
\sup_{\lambda \in \Gamma_\form} |E_\lambda f(\vector{x})|.
\] 
Our strategy is then to give sufficiently decaying $\ell^p$-bounds bounds on $M_*^{a/q}$ to sum over $a$ and $q$ and to prove $\ell^p$-boundedness of $E_*$ through its dyadic counterparts. 

\begin{lemma}\label{lemma:Magyar:main_piece}
If $\form(\vector{x}) \in \Z[\vector{x}]$ is a non-singular, integral, homogeneous form satisfying \eqref{Birch_criterion}, then for $1 \leq p \leq 2$ we have the bound 
\begin{equation}
\| M_*^{a/q} \|_{\ell^p(Z^d) \to \ell^p(\Z^d)} 
\lesssim_\epsilon 
q^{\epsilon-\kappa(2-\frac{2}{p})}
\quad \text{for each} \quad \epsilon>0.
\end{equation}
\end{lemma}
\noindent 
Summing over $a$ and $q$ immediately yields the following corollary. 
We see that we only need $2-\kappa(2-\frac{2}{p})<0$ which is when $p > \frac{\kappa}{\kappa-1}$. 
\begin{corollary}
Let $\form(\vector{x}) \in \Z[\vector{x}]$ be a non-singular, integral, homogeneous form satisfying \eqref{Birch_criterion}, and $\Gamma_{\form}$ be a set of regular values for the form $\form$. 
If $\frac{\kappa}{\kappa-1} < p \leq 2$, then 
\begin{equation}\label{eq:main_term} 
\| \sup_{\lambda \in \Gamma_\form} |M_\lambda f| \|_{\ell^{p}(\Z^d)} 
\lesssim_{\form,p} 
\| f \|_{\ell^{p}(\Z^d)}.
\end{equation}
\end{corollary}

Our lemma for the error term is the following. 
\begin{lemma}\label{lemma:Magyar:error}
Let $\form(\vector{x}) \in \Z[\vector{x}]$ be a non-singular, integral, homogeneous form satisfying \eqref{Birch_criterion}, and $\Gamma_{\form}$ be a set of regular values for the form $\form$. 
If $1 \leq p \leq 2$, then for all $\epsilon>0$, 
\begin{equation}\label{eq:error:lp:dyadic} 
\| \sup_{\Lambda \leq \lambda < 2\Lambda : \lambda \in \Gamma_\form} |E_\lambda f| \|_{\ell^p(\Z^d)}
\lesssim_\epsilon 
\Lambda^{(\frac{2}{p}-1) - \gamma_\form(2-\frac{2}{p}) +\epsilon} 
\|f\|_{\ell^{p}(\Z^d)} 
\quad \text{for all} \quad \Lambda \geq 1.
\end{equation}
\end{lemma}

Summing \eqref{eq:error:lp:dyadic} over $\Lambda = 2^j$ for integers $j \geq 0$ immediately yields the following corollary.
\begin{corollary}
The error term maximal function $E_*$ is bounded on $\ell^p(\Z^d)$ for $p > \frac{2(\gamma+1)}{2\gamma+1} = 1+\frac{1}{2\gamma+1}$. 
\end{corollary}

\subsection{Proof of Lemma~\ref{lemma:Magyar:main_piece}}

We use the Magyar--Stein--Wainger approach of \cite{MSW}. 
For this we will need the following estimate for analogous Euclidean maximal functions. 
\begin{proposition}
If $\form(\vector{x}) \in \Z[\vector{x}]$ is a non-singular, integral, homogeneous form satisfying \eqref{Birch_criterion}, then the maximal function $\sup_{t>0} |F*d\sigma_\form|$, defined a priori for smooth compactly supported functions $F : \R^d \to \C$, is bounded on $L^p(\R^d)$ for $p>1+(2\kappa-2)^{-1} = \frac{2\kappa-1}{2(\kappa-1)}$. 
\end{proposition}

\begin{proof}
The proposition follows from Magyar's estimate \eqref{eq:Birch:surface_measure} by applying Theorem~A  of \cite{Rubio} since $\kappa>2$. 
\end{proof}

We resume the proof of Lemma~\ref{lemma:Magyar:main_piece}.
By \eqref{eq:kernel} of Proposition~\ref{proposition:main_term:l1tolinfty} we have that 
\[
\| M_*^{a/q} \|_{\ell^1(Z^d) \to \ell^1(\Z^d)} 
\lesssim 
1. 
\]
The Magyar--Stein--Wainger transference principle (Proposition~2.1 of \cite{MSW}) and the estimate \eqref{eq:Birch:Weyl_sum} imply that 
\[
\| M_*^{a/q} \|_{\ell^2(Z^d) \to \ell^2(\Z^d)} 
\lesssim 
q^{\epsilon-\kappa}.
\]
Interpolation handles the remaining $1<p<2$.

\subsection{Proof of Lemma~\ref{lemma:Magyar:error}}

The $\ell^2 \to \ell^2$-estimate is \eqref{eq:error_bound}. 
Interpolation with the $\ell^1 \to \ell^1$-estimate 
\[
\| \sup_{\Lambda \leq \lambda < 2\Lambda} |E_\lambda f| \|_{\ell^1(Z^d)} 
\lesssim 
\Lambda \| f \|_{\ell^1(Z^d)}
\]
concludes the lemma. 

To prove this $\ell^1 \to \ell^1$-estimate we note that the union bound implies that 
\[
\| \sup_{\Lambda \leq \lambda < 2\Lambda} |A_\lambda f| \|_{\ell^1(Z^d)} 
\leq 
\Lambda \| f \|_{\ell^1(Z^d)}
\]
since each average is a contraction on $\ell^1$. 
Meanwhile, Proposition~\ref{proposition:main_term:l1tolinfty} implies that
\begin{equation}
\| \sup_{\Lambda \leq \lambda < 2\Lambda} |M_\lambda| \|_{\ell^1(Z^d) \to \ell^1(\Z^d)} 
\lesssim_\epsilon 
\Lambda^{2/k}.
\end{equation}
Since $k \geq 2$, we have that 
\[
\| \sup_{\Lambda \leq \lambda < 2\Lambda} |E_\lambda f| \|_{\ell^1(Z^d)} 
= 
\| \sup_{\Lambda \leq \lambda < 2\Lambda} |A_\lambda f - M_\lambda f| \|_{\ell^1(Z^d)} 
\lesssim 
\Lambda \| f \|_{\ell^1(Z^d)}.
\]

\end{appendix}

%
%

\bibliographystyle{amsalpha}

\end{document}